\date{}
\newcommand{\beqa}{\begin{eqnarray*}}
\newcommand{\eeqa}{\end{eqnarray*}}
\newcommand{\beqn}{\begin{eqnarray}}
\newcommand{\eeqn}{\end{eqnarray}}
\newcommand{\cG}{\mathbb{G}}
\newcommand{\e}{\varepsilon}
\newcommand{\de}{\delta}
\newcounter{cnt1}
\newcounter{cnt2}
\newcounter{cnt3}
\newcommand{\blr}{\begin{list}{$($\roman{cnt1}$)$}
 {\usecounter{cnt1} \setlength{\topsep}{0pt}
 \setlength{\itemsep}{0pt}}}
\newcommand{\bla}{\begin{list}{$($\alph{cnt2}$)$}
 {\usecounter{cnt2} \setlength{\topsep}{0pt}
 \setlength{\itemsep}{0pt}}}
\newcommand{\bln}{\begin{list}{$($\arabic{cnt3}$)$}
 {\usecounter{cnt3} \setlength{\topsep}{0pt}
 \setlength{\itemsep}{0pt}}}
\newcommand{\el}{\end{list}}
\newtheorem{thm}{Theorem}[section]
\newtheorem{prob}[thm]{Problem}
\newtheorem{cor}[thm]{Corollary}
\newtheorem{Def}[thm]{Definition}
\newtheorem{prop}[thm]{Proposition}
\newtheorem{rem}[thm]{Remark}
\newcommand{\Rem}{\begin{rem} \rm}
\newcommand{\bdfn}{\begin{Def} \rm}
\newcommand{\edfn}{\end{Def}}
\newcommand{\ba}{\begin{array}}
\newcommand{\ea}{\end{array}}
\newtheorem*{ack}{Acknowledgement}
\newcommand{\ben}{\begin{enumerate}}
\newcommand{\een}{\end{enumerate}}
\begin{document}
\sloppy

\title[Commutative \& noncommutative Gurariy spaces]{A note on commutative and\\ noncommutative Gurariy spaces}

\author[Bandyopadhyay]{Pradipta Bandyopadhyay}
\address[Pradipta Bandyopadhyay]{Stat--Math Division\\
Indian Statistical Institute\\ 203, B.~T. Road\\ Kolkata
700 108, India. E-mail~: {\tt pradipta@isical.ac.in}}

\author[Sensarma]{Aryaman Sensarma}
\address[Aryaman Sensarma]{Stat--Math Division\\
Indian Statistical Institute\\8th Mile, Mysore Road\\ Bangalore 560059\\India, E-mail~: {\tt aryamansensarma@gmail.com}}

\begin{abstract}
In this short note, we answer two questions about Gurariy spaces asked in the literature in the affirmative. We also prove the analogue of one of the results for the noncommutative Guariy space.
\end{abstract}

\subjclass[2010]{46B25; 46B20; 46L07; 46L25.}

\keywords{Gurariy space, Noncommutative Guariy space, Almost isometry, Almost isometric ideals, Completely bounded Banach-Mazur distance. \\ \hspace*{\fill} {\bf Version: \today}}

\dedicatory{Dedicated to the memory of Professor Sudipta Dutta!}

\maketitle

\section{Introduction}

In this short note, we show that if a Banach space $X$ is almost isometric to a Gurariy space, then $X$ is also a Gurariy space. This answers a question posed in \cite{R1} in the affirmative. We prove an analogue of this result for the noncommutative Gurariy space (Definition~\ref{def1}) discussed in \cite{O}, as well as for $\mathcal{E}$-Gurariy property (Definition~\ref{def2}), and $\mathcal{E}$-injective property (Definition~\ref{def3}) discussed in \cite{P}.

We also show that a Banach space $X$ is a Gurariy space if and only if it is an almost isometric ideal in every superspace in which it embeds as a hyperplane, thereby answering a question posed in \cite{R} in the affirmative.

We end this note with some more observations about Gurariy spaces.

\section{Almost isometry}

\bdfn
Let $X$ and $Y$ be Banach spaces. \bla
\item Let $\e > 0$. A linear operator $T : X \to Y$ is an $\e$-isometry if
 \[
 (1+\e)^{-1} \|x\| \leq \|T(x)\| \leq (1+\e) \|x\| \quad \mbox{for all } x \in X.
 \]
\item We say that $X$ and $Y$ are almost isometric if for every $\e > 0$, there is an onto $\e$-isometry $T : X \to Y$.
\item \cite{G} \label{G} A Banach space $X$ is called a \emph{Gurariy space} if for every $\e >0$, and every pair of finite dimensional spaces $E \subseteq F$, every isometry $T : E \to X$ extends to an $\e$-isometry $\widetilde{T} : F \to X$.
    \el
\edfn

In \cite{G}, Gurariy proved the existence of a separable Gurariy space. We refer to \cite{ASCGM,GK} and \cite[Chapter 4]{GL} for more information on Gurariy spaces and to \cite{BDS1,K} for some recent results. For example, the separable Gurariy space is unique upto isometry. Throughout this note, this space will be denoted by~$\cG$.

Here is our first result. This is related to \cite[Problem 9.2]{MP}. 

\begin{thm}\label{thm3}
If a Banach space $X$ is almost isometric to a Gurariy space $G$, then $X$ is also a Gurariy space.
\end{thm}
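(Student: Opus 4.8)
The plan is to reduce everything to the exact extension property that defines a Gurariy space, by first upgrading that property to an approximate version via a standard renorming trick, and then transporting the data back and forth through the almost-isometry.

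\emph{Step 1 (a perturbed extension lemma).} I would first record the following refinement: if $G$ is a Gurariy space, $E \ci F$ are finite dimensional, $\de \ge 0$, $\eta > 0$, and $T : E \to G$ is a $\de$-isometry, then $T$ extends to an $\bigl((1+\de)(1+\eta)-1\bigr)$-isometry $\widetilde T : F \to G$. To see this, renorm $F$ by the inf-convolution
\[
\|f\|_2 \;=\; \inf\bigl\{ (1+\de)\|f-e\| + \|Te\| : e \in E \bigr\},
\]
where $\|\cdot\|$ is the original norm of $F$ (and of $G$). One checks directly that $\|\cdot\|_2$ is a norm satisfying $(1+\de)^{-1}\|f\| \le \|f\|_2 \le (1+\de)\|f\|$ for all $f \in F$ and $\|e\|_2 = \|Te\|$ for all $e \in E$. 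Hence the formal identity $\iota : F \to (F,\|\cdot\|_2)$ is a $\de$-isometry, while $T$ is an \emph{isometry} of the subspace $(E,\|\cdot\|_2)$ of $(F,\|\cdot\|_2)$ into $G$. Applying the Gurariy property of $G$ to this isometry and the pair $(E,\|\cdot\|_2) \ci (F,\|\cdot\|_2)$ produces an $\eta$-isometry $\Psi : (F,\|\cdot\|_2) \to G$ with $\Psi|_E = T$; then $\widetilde T := \Psi \circ \iota$ extends $T$ and is a $\bigl((1+\de)(1+\eta)-1\bigr)$-isometry, being a composition of a $\de$-isometry with an $\eta$-isometry.

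\emph{Step 2 (transport through the almost-isometry).} Now let $X$ be almost isometric to the Gurariy space $G$, fix $\e > 0$, a finite dimensional pair $E \ci F$, and an isometry $T : E \to X$. Choose $\de, \eta > 0$ with $(1+\de)^2(1+\eta) \le 1+\e$. By almost isometry there is an onto $\de$-isometry $S : X \to G$; it is injective, so $S^{-1} : G \to X$ exists and is again an (onto) $\de$-isometry. The composition $S \circ T : E \to G$ is a $\de$-isometry, so Step~1 gives a $\bigl((1+\de)(1+\eta)-1\bigr)$-isometry $\Psi : F \to G$ with $\Psi|_E = S \circ T$. Set $\widetilde T := S^{-1} \circ \Psi : F \to X$. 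Then $\widetilde T|_E = S^{-1} \circ S \circ T = T$, and $\widetilde T$ is a composition of a $\bigl((1+\de)(1+\eta)-1\bigr)$-isometry with a $\de$-isometry, hence a $\mu$-isometry with $1+\mu = (1+\de)^2(1+\eta) \le 1+\e$. Thus $T$ extends to an $\e$-isometry $F \to X$, and $X$ is a Gurariy space.

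\emph{Where the work is.} The only step requiring care is the verification in Step~1 that $\|\cdot\|_2$ is a genuine norm restricting correctly to $E$: positive homogeneity and subadditivity are formal; positivity follows from the lower bound $\|f\|_2 \ge (1+\de)^{-1}\|f\|$, which uses $\|Te\| \ge (1+\de)^{-1}\|e\|$ together with the triangle inequality; and the identity $\|e_0\|_2 = \|Te_0\|$ on $E$ uses $\|Te\| \ge \|Te_0\| - \|T(e_0-e)\| \ge \|Te_0\| - (1+\de)\|e_0-e\|$. Everything else is bookkeeping with multiplicative constants, with $\de$ and $\eta$ chosen small only at the very end. This argument is insensitive to separability of $G$, and the same scheme should apply, mutatis mutandis, to the noncommutative Gurariy space and to the $\mathcal{E}$-variants, once "isometry" and "$\e$-isometry" are read in the appropriate (completely bounded) sense and "finite dimensional spaces" is replaced by the relevant class.
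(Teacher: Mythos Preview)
Your proof is correct and follows essentially the same route as the paper's: transport the isometry through the almost-isometry $X\to G$, renorm $F$ so that the resulting $\de$-isometry on $E$ becomes an honest isometry, apply the Gurariy property of $G$, and transport back. The only cosmetic difference is that you write out the norm extension explicitly via the inf-convolution $\|f\|_2=\inf\{(1+\de)\|f-e\|+\|Te\|:e\in E\}$ and package the ``$\de$-isometries extend'' statement as a separate lemma, whereas the paper invokes \cite[Lemma~1.1]{GK} for the norm extension and carries out the same computation inline with the single parameter choice $(1+\de)^3\le 1+\e$.
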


\begin{proof}
Let $\e >0$. Choose $\de>0$ such that $(1+\de)^3 \leq 1+\e$. Since $X$ is almost isometric to $G$, there exists an onto $\de$-isometry $T_1: X \to G$.

Let $E \subseteq F$ be finite dimensional spaces and let $T: E \to X$ be an isometry. Then $S = T_1 \circ T : E \to G$ is an $\de$-isometry.

Define a new norm on $E$ by $\|e\|_1 = \|S(e)\|$. Clearly, $\|\cdot\|_1$ is $\de$-equivalent to $\|\cdot\|$ on $E$. By \cite[Lemma 1.1] {GK}, we can extend this norm to $F$, still denoted by $\|\cdot\|_1$, so that it is $\de$-equivalent to $\|\cdot\|$ on $F$.

Now, $S : (E, \|\cdot\|_1) \to G$ is an isometry. Since $G$ is a Gurariy space, $S$ can be extended to a $\de$-isometry $\widetilde{S} : (F, \|\cdot\|_1) \to G$, that is,
\[
(1+\de )^{-1} \|z\|_1 \leq \|\widetilde{S}(z)\| \leq (1+\de ) \|z\|_1 \quad \mbox{for all } z \in F.
\]

Now coming back to $\|\cdot\|$ we see that
\[
(1+\de)^{-2} \|z\| \leq \|\widetilde{S}(z)\| \leq (1+\de)^2 \|z\| \quad \mbox{for all } z \in F.
\]

Now consider the map $\widetilde{T} = T_1^{-1} \circ \widetilde{S} : F \to X$. Clearly, $\widetilde{T}$ extends $T$ and
\[
(1+\e)^{-1} \|z\| \leq (1+\de)^{-3} \|z\| \leq \|\widetilde{T}(z)\| \leq (1+\de)^3 \|z\| \leq (1+\e )\|z\|
\]
for all $z \in F$. Hence $X$ is a Gurariy space.
\end{proof}

Since the separable Gurariy space is unique upto isometry, this answers the question posed in \cite[Remark 6]{R1} in the affirmative.

\begin{cor}\label{cor}
If a Banach space $X$ is almost isometric to $\cG$, then it is isometric to $\cG$.
\end{cor}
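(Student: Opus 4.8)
The plan is to read this off directly from Theorem~\ref{thm3} together with the uniqueness (up to isometry) of the separable Gurariy space quoted above. Since $X$ is almost isometric to $\cG$, Theorem~\ref{thm3} already gives that $X$ is a Gurariy space; so the whole task reduces to checking that $X$ is \emph{separable}, after which the uniqueness statement finishes the argument.

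To obtain separability, I would fix some $\e>0$ and pick an onto $\e$-isometry $T:X\to\cG$. The defining inequalities of an $\e$-isometry force $T$ to be injective with $\|T^{-1}\|\le 1+\e$, so $T$ is a linear homeomorphism of $X$ onto $\cG$. As $\cG$ is separable and separability is invariant under linear homeomorphisms (indeed under continuous surjections), $X$ is separable. Hence $X$ is a separable Gurariy space, and therefore isometric to $\cG$.

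I do not anticipate a real obstacle here: all the substance sits in Theorem~\ref{thm3} and in the known uniqueness theorem for the separable Gurariy space. The only point worth a moment's care is that ``almost isometric'' should be read as supplying, for a given $\e$, an honest onto linear isomorphism $X\to\cG$ — this is exactly what lets separability pass from $\cG$ to $X$ — but that is immediate from the definition of an $\e$-isometry.
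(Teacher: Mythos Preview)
Your argument is correct and matches the paper's approach: the paper simply invokes Theorem~\ref{thm3} together with the uniqueness (up to isometry) of the separable Gurariy space. You add the explicit verification that $X$ is separable via the $\e$-isometry $T$, a point the paper leaves implicit; this is a harmless and welcome clarification rather than a different route.
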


Combining \cite[Theorem 2.6]{BDS1} and Corollary~\ref{cor}, we get an improvement of \cite[Theorem 2.6]{BDS1}.

\begin{thm}
A (non-separable) Banach space $X$ is a Gurariy space if and only if every separable a.i.-ideal in $X$ is almost isometric to $\cG$.
\end{thm}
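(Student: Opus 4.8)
The plan is to deduce the statement directly from the two results it was billed as a combination of: \cite[Theorem 2.6]{BDS1}, which characterizes the non-separable Gurariy spaces as precisely those $X$ for which every separable a.i.-ideal in $X$ is \emph{isometric} to $\cG$, and Corollary~\ref{cor}, which says that a Banach space almost isometric to $\cG$ is already isometric to $\cG$. The whole content is the observation that, for a Banach space, ``isometric to $\cG$'' and ``almost isometric to $\cG$'' are equivalent conditions, so the two forms of the characterization say the same thing.

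For the forward implication, assume $X$ is a Gurariy space. By \cite[Theorem 2.6]{BDS1} every separable a.i.-ideal in $X$ is isometric to $\cG$; since an onto isometry is trivially an onto $\e$-isometry for every $\e>0$, each such a.i.-ideal is in particular almost isometric to $\cG$.

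For the converse, assume every separable a.i.-ideal in $X$ is almost isometric to $\cG$. Let $Y$ be such an a.i.-ideal. Then $Y$ is a Banach space which is almost isometric to $\cG$, so Corollary~\ref{cor} applies and gives that $Y$ is isometric to $\cG$. Since this holds for every separable a.i.-ideal in $X$, \cite[Theorem 2.6]{BDS1} yields that $X$ is a Gurariy space.

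I do not anticipate any real obstacle: the argument is essentially one line of bookkeeping once Corollary~\ref{cor} is available, and the only thing to keep track of is that the class of ``separable a.i.-ideals'' over which the quantifier ranges is exactly the one used in \cite[Theorem 2.6]{BDS1}, so that no additional structural input (for instance on the existence or abundance of such ideals) is required.
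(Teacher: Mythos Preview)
Your argument is correct and is exactly the combination the paper has in mind: the paper does not even write out a separate proof, but simply states that the theorem follows by combining \cite[Theorem 2.6]{BDS1} with Corollary~\ref{cor}. Your two implications unpack precisely this combination.
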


\section{The Noncommutative Analogue}

Noncommutative Gurariy space was introduced by Oikhberg \cite{O} within the framework of operator spaces. Recently Pisier \cite{P} has done an extensive study on the Gurariy property in noncommutative set up.

\bdfn \label{def1}
\bla
\item If $V$ and $W$ are operator spaces, we define the completely bounded (cb) Banach-Mazur distance $d_{{cb}}(V, W)$ by
\[
 d_{{cb}}(V, W) = \inf\{\|\phi\|\|\phi^{-1}\| : \phi : V \to W \mbox{ is a cb linear isomorphism}\}.
 \]

\item An operator space $X$ is called c-exact if, for every finite-dimensional subspace $E$ of $X$, and for every $\e>0$  there exists a subspace $F$ of $M_n(\mathbb{C})$ satisfying  $d_{{cb}}(E, F)< c+\e$.

\item \cite{O} A separable $1$-exact space $X$ is a \emph{non-commutative Gurariy space} if it satisfies the following condition:
    \begin{quote}
    suppose a finite dimensional operator space $F$ is $1$-exact, $E\subseteq F$, $E^{'} \subseteq X$ and $u: E \longrightarrow E^{'}$ is a cb-isomorphism. Then for any $\delta>0$ there exists a subspace $F^{'} \subseteq X$ containing $E^{'}$ and a linear map $\widetilde{u}: F \longrightarrow F^{'}$ such that $\widetilde{u}|_{E}=u$ and $\|\widetilde{u}\|_{{cb}} \|\widetilde{u}^{-1}\|_{{cb}} < (1+\delta) \|u\|_{{cb}} \|u^{-1}\|_{{cb}}$.
    \end{quote}
    \el
\edfn

In \cite{L}, the authors prove the main properties of the noncommutative Gurariy space. For example, the noncommutative Gurariy space is unique upto completely isometric isomorphism, and this space is universal for separable $1$-exact operator space. Throughout this note, this space will be denoted by $\mathbb{NG}$. $\mathbb{NG}$ can also be thought as the operator space analogue of the Cuntz algebra $\mathcal{O}_2$. $\mathbb{NG}$ is the first example of a separable $1$-exact operator space that is not an $M_n$-space for any $n\in \mathbb{N}$.

Now we prove the noncommutative analogue of Corollary~\ref{cor}. This is also a contribution to \cite[Problem 9.2]{MP}. We believe that this result also is of independent interest and will prove useful in future investigations.

\begin{thm}\label{NG}
If an operator space $X$ satisfies $d_{{cb}}(X,\mathbb{NG})=1$, then $X$ is completely isometrically isomorphic to $\mathbb{NG}$.
\end{thm}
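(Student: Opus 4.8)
The plan is to deduce from $d_{cb}(X,\mathbb{NG})=1$ that $X$ is itself a noncommutative Gurariy space in the sense of Definition~\ref{def1}(c), and then to invoke the uniqueness of $\mathbb{NG}$ up to complete isometry established in \cite{L}. Thus the whole argument is the operator space counterpart of the chain Theorem~\ref{thm3}~$\Rightarrow$~Corollary~\ref{cor}.

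First I would dispose of the structural hypotheses built into that definition. Since $d_{cb}(X,\mathbb{NG})=1$ there is a cb linear isomorphism $\phi\colon X\to\mathbb{NG}$, so $X$ is linearly isomorphic to the separable space $\mathbb{NG}$ and hence separable. For $1$-exactness, fix a finite dimensional $E\subseteq X$ and $\e>0$; choose $\phi$ with $\|\phi\|_{cb}\|\phi^{-1}\|_{cb}<1+\e_1$ for a small $\e_1$, so that $d_{cb}(E,\phi(E))<1+\e_1$; since $\mathbb{NG}$ is $1$-exact there is $F\subseteq M_n(\mathbb{C})$ with $d_{cb}(\phi(E),F)<1+\e_2$; and the multiplicative triangle inequality for $d_{cb}$ gives $d_{cb}(E,F)<(1+\e_1)(1+\e_2)\le 1+\e$ for suitable $\e_1,\e_2$. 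Thus $X$ is separable and $1$-exact.

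The heart of the argument is transporting the extension property, which — because the noncommutative Gurariy condition is already phrased in an almost-isometric fashion, in terms of $d_{cb}$ rather than complete isometries — is a direct ``conjugation by $\phi$'' and needs no analogue of the norm-extension lemma used in the proof of Theorem~\ref{thm3}. Let $\de>0$, let $F$ be a $1$-exact finite dimensional operator space with $E\subseteq F$, let $E'\subseteq X$, and let $u\colon E\to E'$ be a cb isomorphism. Pick $\e,\eta>0$ with $(1+\e)^2(1+\eta)\le 1+\de$, choose $\phi$ with $\|\phi\|_{cb}\|\phi^{-1}\|_{cb}<1+\e$, and rescale so that $\|\phi\|_{cb},\|\phi^{-1}\|_{cb}\le(1+\e)^{1/2}$. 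Then $\phi\circ u\colon E\to\phi(E')\subseteq\mathbb{NG}$ is a cb isomorphism with $\|\phi u\|_{cb}\|(\phi u)^{-1}\|_{cb}\le(1+\e)\|u\|_{cb}\|u^{-1}\|_{cb}$. Applying the defining property of $\mathbb{NG}$ to $E\subseteq F$, $\phi(E')\subseteq\mathbb{NG}$ and $\phi\circ u$ yields a subspace $G'\subseteq\mathbb{NG}$ containing $\phi(E')$ and a cb isomorphism $v\colon F\to G'$ with $v|_E=\phi\circ u$ and $\|v\|_{cb}\|v^{-1}\|_{cb}<(1+\eta)\|\phi u\|_{cb}\|(\phi u)^{-1}\|_{cb}$. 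Set $F'=\phi^{-1}(G')\subseteq X$ and $\widetilde u=\phi^{-1}\circ v\colon F\to F'$; then $E'\subseteq F'$, $\widetilde u|_E=\phi^{-1}\circ\phi\circ u=u$, and $\widetilde u$ is a cb isomorphism with $\|\widetilde u\|_{cb}\|\widetilde u^{-1}\|_{cb}\le(1+\e)\|v\|_{cb}\|v^{-1}\|_{cb}<(1+\e)^2(1+\eta)\|u\|_{cb}\|u^{-1}\|_{cb}\le(1+\de)\|u\|_{cb}\|u^{-1}\|_{cb}$. This is exactly the condition of Definition~\ref{def1}(c), so $X$ is a noncommutative Gurariy space, and by \cite{L} it is completely isometrically isomorphic to $\mathbb{NG}$.

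The only points requiring care are the bookkeeping of the products $\|\cdot\|_{cb}\|\cdot^{-1}\|_{cb}$ through the two conjugations by $\phi$ and $\phi^{-1}$, and the (routine) verification of $1$-exactness of $X$. I do not anticipate a substantive obstacle: the essential observation is simply that, unlike in the commutative setting of Theorem~\ref{thm3} where one must first adjust a norm on $E$ and extend it to $F$, here the noncommutative Gurariy property is robust under a cb isomorphism of distortion arbitrarily close to $1$, so the transfer is purely formal once the definitions are unwound.
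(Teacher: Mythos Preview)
Your proof is correct and follows essentially the same route as the paper: verify $1$-exactness of $X$ by pushing finite dimensional subspaces through a near-complete-isometry into $\mathbb{NG}$, then transfer the extension property by conjugating with $\phi$ and tracking the multiplicative distortion, and finally invoke the uniqueness theorem of \cite{L}. The only cosmetic differences are your explicit mention of separability and your bookkeeping via $(1+\e)^2(1+\eta)$ rather than the paper's $(1+\delta)^5$; your remark that no norm-extension lemma is needed here (unlike in Theorem~\ref{thm3}) is a correct and worthwhile observation.
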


\begin{proof}
 Let $\e>0$. Choose $\eta >0$ such that $(1+\eta)^2<1+\e$. Let $E \subseteq X$ be a finite-dimensional subspace. Since $d_{{cb}}(X,\mathbb{NG})=1$, there exists a finite dimensional subspace $E^{'} \subseteq \mathbb{NG}$ such that $d_{{cb}}(E, E^{'})\leq 1+\eta$. As $\mathbb{NG}$ is $1$-exact, there exists $F \subseteq M_n(\mathbb{C})$ such that $d_{{cb}}(E^{'},F)<1+\eta$. Therefore $d_{{cb}}(E,F)\leq d_{{cb}}(E, E^{'}) d_{{cb}}(E^{'},F)<(1+\eta)^2<1+\e$. So, $X$ is $1$-exact.

Choose $\delta>0$ such that $(1+\delta)^5<1+\e$. Since $d_{{cb}}(X,\mathbb{NG})=1$, there exists an onto $\delta$-cb isomorphism $v:X \longrightarrow \mathbb{NG}$, that is, $v_n: M_n(X) \longrightarrow M_n(\mathbb{NG})$ is a $\delta$-isomorphism for all $n \geq 1$ and $\|v\|=\sup_{n\geq 1}\|v_n\|< \infty$. Let $F$ be $1$-exact finite dimensional operator space. Consider $E \subseteq F$. Let $u:E \longrightarrow E^{'} \subseteq X$ is a cb isomorphism, that is,  $u_n: M_n(E) \longrightarrow M_n(E^{'})$ is an isomorphism for all $n\geq 1$ and $\|u\|=\sup_{n \geq 1}\|u_n\|< \infty$. Then $w=v \circ u$ is a cb-isomorphism onto $w(E)$. Since $\mathbb{NG}$ is non-commutative Gurariy space, there exists $\widetilde{w}: F \longrightarrow \mathbb{NG}$ linear map such that $\widetilde{w}|_{E}=w$, and $\|\widetilde{w}\|_{{cb}} \|\widetilde{w}^{-1}\|_{{cb}} < (1+\delta) \|w\|_{{cb}}\|w^{-1}\|_{{cb}}$. Define $\widetilde{u}=v^{-1} \circ \widetilde{w}$. Now $\widetilde{u}|_{E}=v^{-1} \circ \widetilde{w}|_{E}=v^{-1}\circ w=v^{-1}\circ v \circ u=u$. Here $\widetilde{u}$ is a cb-isomorphism from $F$ to $\widetilde{u}(F)$. Now,
\beqa
\|\widetilde{u}\|_{{cb}}\|\widetilde{u}^{-1}\|_{{cb}} & = & \|v^{-1} \circ \widetilde{w}\|_{{cb}}\| \widetilde{w}^{-1} \circ v\|_{{cb}}\leq \|v\|_{{cb}}\|v^{-1}\|_{{cb}}\|\widetilde{w}\|_{{cb}}\|\widetilde{w}^{-1}\|_{{cb}}\\
& \leq & (1+\delta)^{2} \|\widetilde{w}\|_{{cb}} \|\widetilde{w}^{-1}\|_{{cb}}.
\eeqa

Since $\mathbb{NG}$ is non-commutative Gurariy space,
\[
(1+\delta)^{2}\|\widetilde{w}\|_{{cb}}\|\widetilde{w}^{-1}\|_{{cb}} < (1+\delta)^{3} \|w\|_{{cb}}\|w^{-1}\|_{{cb}}.
\]
So,
\beqa
(1+\delta)^{3}\|w\|_{{cb}}\|w^{-1}\|_{{cb}} & = & (1+\delta)^{3}\|v \circ u\|_{{cb}}\|u^{-1}\circ v^{-1}\|_{{cb}}\\
& \leq & (1+\delta)^{3}\|v\|_{{cb}} \|v^{-1}\|_{{cb}} \|u\|_{{cb}}\|u^{-1}\|_{{cb}} \\
& \leq & (1+\delta)^{5}\|u\|_{{cb}} \|u^{-1}\|_{{cb}} < (1+\e)\|u\|_{{cb}}\|u^{-1}\|_{{cb}}.
\eeqa
Since non-commutative Gurariy space is unique upto complete isomtery \cite[Theorem 1.1]{L}, $X$ is completely isometrically isomorphic to $\mathbb{NG}$.

This completes the proof.
\end{proof}

\begin{Def} \rm \label{def2} \cite[Definition 4.4]{P}
Let $\mathcal{E}$ be a class of finite dimensional operator spaces. An operator space $X$ has the \emph{$\mathcal{E}$-Gurariy property} if for any $\e>0$ and for any pair of spaces $E \subseteq F$ with $F \in \mathcal{E}$ the following holds:
    \begin{quote}
for any injective linear map $u: E \longrightarrow X$ there exists an injective extension $\widetilde{u}:F\longrightarrow X$ such that $\|\widetilde{u}\|_{{cb}} \|\widetilde{u}^{-1}\|_{{cb}}\leq  (1+\e) \|u\|_{{cb}} \|u^{-1}\|_{{cb}}$.
    \end{quote}
\end{Def}

Using the same argument of Theorem~\ref{NG} we get the following
\begin{cor}
Let $X$ be an operator space has the $\mathcal{E}$-Gurariy property. If an operator space $Y$ satisfies $d_{{cb}}(Y,X)=1$, then $Y$ also has the $\mathcal{E}$-Gurariy property.
\end{cor}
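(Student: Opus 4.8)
The plan is to mimic the proof of Theorem~\ref{NG} almost verbatim, with $X$ playing the role of $\mathbb{NG}$ and $Y$ the role of the operator space whose Gurariy-type property we wish to establish. Fix $\e>0$ and choose $\delta>0$ with $(1+\delta)^{5}<1+\e$ (or whatever small power the bookkeeping requires). Since $d_{cb}(Y,X)=1$, for each $n$ we may pick an onto $\delta$-cb isomorphism $v:Y\longrightarrow X$, i.e.\ $\|v\|_{cb}\|v^{-1}\|_{cb}\leq 1+\delta$.

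Next, take a pair $E\subseteq F$ with $F\in\mathcal{E}$ and an injective linear map $u:E\longrightarrow Y$. Note $u$ is automatically a cb-isomorphism onto its (finite-dimensional) range. Form $w=v\circ u:E\longrightarrow X$, which is again an injective linear map into $X$, and apply the $\mathcal{E}$-Gurariy property of $X$ to obtain an injective extension $\widetilde{w}:F\longrightarrow X$ with $\|\widetilde{w}\|_{cb}\|\widetilde{w}^{-1}\|_{cb}\leq(1+\delta)\|w\|_{cb}\|w^{-1}\|_{cb}$. Then set $\widetilde{u}=v^{-1}\circ\widetilde{w}:F\longrightarrow Y$. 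One checks $\widetilde{u}|_{E}=v^{-1}\circ w=v^{-1}\circ v\circ u=u$, so $\widetilde{u}$ is an injective extension of $u$, and the submultiplicativity of the cb-norm gives
\[
\|\widetilde{u}\|_{cb}\|\widetilde{u}^{-1}\|_{cb}\leq(1+\delta)^{2}\|\widetilde{w}\|_{cb}\|\widetilde{w}^{-1}\|_{cb}\leq(1+\delta)^{3}\|w\|_{cb}\|w^{-1}\|_{cb}\leq(1+\delta)^{5}\|u\|_{cb}\|u^{-1}\|_{cb}<(1+\e)\|u\|_{cb}\|u^{-1}\|_{cb},
\]
as required.

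The only subtlety—and the point I would flag as the main thing to get right rather than a genuine obstacle—is that in the $\mathcal{E}$-Gurariy framework there is no $1$-exactness hypothesis to verify on $Y$ (unlike in Theorem~\ref{NG}), since Definition~\ref{def2} places the restriction on $F$ through the class $\mathcal{E}$ and not on the ambient space; so that portion of the argument from Theorem~\ref{NG} can simply be dropped. One should also confirm that an injective linear map out of a finite-dimensional space is indeed a cb-isomorphism onto its range (true: finite-dimensional operator spaces are completely boundedly isomorphic to their images under any injective linear map, with finite cb-norm of the inverse), so that composing with $v$ and $v^{-1}$ keeps everything within the class of maps to which the hypotheses apply. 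With these remarks in place the proof is a routine transcription of Theorem~\ref{NG}, which is exactly what the phrase ``using the same argument'' signals.
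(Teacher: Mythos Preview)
Your proposal is correct and is precisely the adaptation the paper intends: the paper gives no separate proof but simply states ``Using the same argument of Theorem~\ref{NG},'' and your write-up carries out exactly that transcription, correctly noting that the $1$-exactness verification can be dropped since Definition~\ref{def2} imposes no such condition on the ambient space. The minor bookkeeping slip (stating $\|v\|_{cb}\|v^{-1}\|_{cb}\leq 1+\delta$ but then bounding by $(1+\delta)^{2}$) is harmless given your choice $(1+\delta)^{5}<1+\e$.
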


\begin{Def} \rm \label{def3} \cite[Definition 4.5]{P}
Let $\mathcal{E}$ be a class of finite dimensional operator spaces. An operator space $X$ is called \emph{$\mathcal{E}$-injective} if for any $\e>0$, any $E \in \mathcal{E}$ and any $S \subseteq E$, any map $u:S \longrightarrow X$ admits an extension $\widetilde{u}: E \longrightarrow X$ with $\|\widetilde{u}\|_{{cb}}\leq (1+\e)\|u\|_{{cb}}$.
\end{Def}

\begin{prop}
Let $X$ be an $\mathcal{E}$-injective operator space. If an operator space $Y$ satisfies $d_{{cb}}(Y,X)=1$, then $Y$ is also $\mathcal{E}$-injective.
\end{prop}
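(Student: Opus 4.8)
The plan is to transport the extension problem from $Y$ to $X$ along an almost-complete-isometry, precisely as in the proof of Theorem~\ref{NG} and the corollary following it; the bookkeeping is actually lighter here, since $\mathcal{E}$-injectivity only asks us to control one cb-norm (not a Banach--Mazur-type product) and only asks for the extension of an arbitrary linear map (not of an isomorphism by an isomorphism). I would first fix $\e>0$ and choose $\delta>0$ with $(1+\delta)^3<1+\e$. From $d_{cb}(Y,X)=1$, after the usual rescaling, I would obtain an onto $\delta$-cb isomorphism $v:Y\longrightarrow X$, so that $\|v\|_{cb}\le 1+\delta$ and $\|v^{-1}\|_{cb}\le 1+\delta$.

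Next, given $E\in\mathcal{E}$, a subspace $S\subseteq E$, and a linear map $u:S\longrightarrow Y$ (automatically cb, as $S$ is finite dimensional), I would push it forward to $w=v\circ u:S\longrightarrow X$. Since $X$ is $\mathcal{E}$-injective, $w$ admits an extension $\widetilde{w}:E\longrightarrow X$ with $\|\widetilde{w}\|_{cb}\le (1+\delta)\|w\|_{cb}$. Pulling back, I would set $\widetilde{u}=v^{-1}\circ\widetilde{w}:E\longrightarrow Y$. Then $\widetilde{u}|_S=v^{-1}\circ\widetilde{w}|_S=v^{-1}\circ w=v^{-1}\circ v\circ u=u$, so $\widetilde{u}$ extends $u$, and
\[
\|\widetilde{u}\|_{cb}\le\|v^{-1}\|_{cb}\,\|\widetilde{w}\|_{cb}\le (1+\delta)^2\|w\|_{cb}\le (1+\delta)^2\|v\|_{cb}\|u\|_{cb}\le (1+\delta)^3\|u\|_{cb}<(1+\e)\|u\|_{cb}.
\]
As $\e>0$ is arbitrary, this shows $Y$ is $\mathcal{E}$-injective.

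I do not expect a real obstacle here — the argument is the same transport trick as in Theorem~\ref{NG}. The only points worth a sentence of care are: the rescaling step that turns $d_{cb}(Y,X)=1$ into an onto $\delta$-cb isomorphism with both $\|v\|_{cb}$ and $\|v^{-1}\|_{cb}$ at most $1+\delta$ (identical to what is done in the proof of Theorem~\ref{NG}); and the observation that $\widetilde{u}$ need not be injective, which causes no trouble because Definition~\ref{def3} only requires extending an arbitrary map. Note also that the finite-dimensional data $E$ and $S$ are left untouched by the construction, so the hypothesis $E\in\mathcal{E}$ is preserved for free.
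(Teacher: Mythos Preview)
Your proof is correct and follows essentially the same approach as the paper: choose $\delta$ with $(1+\delta)^3<1+\e$, transport $u$ along an onto $\delta$-cb isomorphism $v:Y\to X$, extend there using $\mathcal{E}$-injectivity of $X$, and pull back via $v^{-1}$, with the identical chain of estimates. Your additional remarks (the rescaling step, the fact that $\widetilde{u}$ need not be injective, and that $E\in\mathcal{E}$ is preserved) are all apt and do not deviate from the paper's argument.
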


\begin{proof}
Let $\e>0$. Choose $\delta >0$ such that $(1+\delta)^3<1+\e$.

Since $d_{{cb}}(Y,X)=1$, there exists an onto $\delta$-cb isomorphism $v:Y \longrightarrow X$, that is, $v_n: M_n(Y) \longrightarrow M_n(X)$ is a $\delta$-isomorphism for all $n \geq 1$ and $\|v\|=\sup_{n\geq 1}\|v_n\|< \infty$. Let $E \in \mathcal{E}$ and $S \subseteq E$. Let $u:S \longrightarrow Y$ be a cb linear map. Consider the cb linear map $w=v \circ u: S \longrightarrow X$. Since $X$ is $\mathcal{E}$-injective, there exists an extension $\widetilde{w}: E \longrightarrow X$ with $\|\widetilde{w}\|_{{cb}} \leq (1+\delta)\|w\|_{{cb}}$.

Define $\widetilde{u}=v^{-1}\circ \widetilde{w}: E \longrightarrow Y$ be a cb linear map. Now $\widetilde{u}|_{S}=v^{-1}\circ \widetilde{w}|_S=v^{-1}\circ v \circ u=u$. And
\beqa
\|\widetilde{u}\|_{{cb}}=\|v^{-1}\circ \widetilde{w}\|_{{cb}} \leq \|v^{-1}\|_{{cb}}\|\widetilde{w}\|_{{cb}}.
\eeqa
Since $X$ is $\mathcal{E}$-injective space,
\beqa
\|v^{-1}\|_{{cb}}\|\widetilde{w}\|_{{cb}} \leq (1+\delta)\|v^{-1}\|_{{cb}}\|w\|_{{cb}} = (1+\delta)\|v^{-1}\|_{{cb}}\|v \circ u\|_{{cb}}\\
 \leq  (1+\delta)\|v\|_{{cb}}\|v^{-1}\|_{{cb}}\|u\|_{{cb}}\leq (1+\delta)^3\|u\|_{{cb}}<(1+\e)\|u\|_{{cb}}.
\eeqa

Hence, $Y$ is also $\mathcal{E}$-injective.

This completes the proof.

\end{proof}

\section{Almost isometric ideals}

Now we return to Banach spaces again.

\bdfn
\label{ai}\cite{AVN}
A (closed linear) subspace $Y$ of a Banach space $X$ is an {\em almost isometric ideal} (a.i.-ideal) in $X$ if for every $\e >0$ and every finite dimensional subspace $E \subseteq X$ there exists $T : E \to Y$ such that
\newpage
\bla
\item[(i)] $Te = e$ for all $e \in Y \cap E$, and
\item[(ii)] $T$ is an $\e$-isometry.
\el
\edfn

It is known \cite[Theorem 4.3]{AVN} that if $X$ is a Gurariy space and $Y \subseteq X$ is an a.i.-ideal in $X$, then $Y$ is a Gurariy space. Indeed, $X$ is a Gurariy space if and only if it is an a.i.-ideal in every superspace containing it. Here we prove~:

\begin{prop} \label{prop2}
Let $X$ be a Gurariy space and $Y \subseteq X$ be an infinite dimensional subspace such that for every $x \in X \setminus Y$, $Y$ is an a.i.-ideal in ${\rm span}\{x, Y\}$. Then $Y$ is a Gurariy space.
\end{prop}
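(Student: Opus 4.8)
The plan is to show directly that $Y$ satisfies the Gurariy extension property, using that $X$ does. Fix $\e>0$, a pair of finite dimensional spaces $E\subseteq F$, and an isometry $T:E\to Y$. The idea is to first extend $T$ inside the larger space $X$ (where extension is cheap, since $X$ is Gurariy), and then push the image back into $Y$ one dimension at a time via the a.i.-ideal hypothesis. Choose $\de>0$ with $(1+\de)^{k}\le 1+\e$, where $k$ is a bound on the number of steps needed (I will see below that $k=3$ or so suffices, since $\dim F-\dim E$ dimensions have to be absorbed, but each absorption step can be folded into a single a.i.-ideal application). Since $X$ is a Gurariy space, extend $T:E\to Y\subseteq X$ to a $\de$-isometry $S:F\to X$.

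The next step is the heart of the argument: replace the range $S(F)\subseteq X$ by a subspace of $Y$ without moving $S|_E$ and without worsening the constant much. Let $G=S(F)+ Y\cap(\text{something})$—more precisely, consider the finite dimensional subspace $G_0=S(F)\subseteq X$. Pick a basis $x_1,\dots,x_m$ of a complement of $G_0\cap Y$ inside $G_0$, so that $G_0\subseteq \operatorname{span}\{x_1,\dots,x_m, Y\}$. Applying the hypothesis iteratively — first $Y$ is an a.i.-ideal in $\operatorname{span}\{x_1,Y\}$, giving a $\de$-isometry fixing $Y$; compose, then repeat for $x_2$, and so on — one obtains, after $m$ steps, a $(1+\de)^m$-isometry $R$ on $G_0$ with $R e=e$ for all $e\in G_0\cap Y$; in particular $R|_{S(E)}=R|_{T(E)}=\mathrm{id}$ since $T(E)\subseteq Y$. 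Then $\widetilde T := R\circ S:F\to Y$ extends $T$ (because $R\circ S|_E=R\circ T=T$) and is an $\e$-isometry provided $\de$ was chosen with $(1+\de)^{m+1}\le 1+\e$. Here is the subtlety: $m=\dim(S(F))-\dim(S(F)\cap Y)$ depends on $S$, hence on $\de$, hence a priori on $\e$ — but $m\le \dim F-\dim E$ is bounded by the fixed data, since $S$ is injective and $S(E)\subseteq Y$, so $S(F)\cap Y\supseteq S(E)$ and $m\le\dim F-\dim E=:k$. So choosing $\de$ with $(1+\de)^{k+1}\le 1+\e$ at the very start closes the loop.

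The main obstacle is exactly this iteration and the bookkeeping that a.i.-ideal maps can be composed: each application of Definition~\ref{ai} to $\operatorname{span}\{x_j,Y\}$ only fixes points of $Y$, not of the previously-built intermediate subspaces, so one must be careful that after absorbing $x_1$ the image of the relevant finite dimensional set still lies in a space of the form $\operatorname{span}\{x_2',Y\}$ to which the hypothesis applies — this works because the hypothesis is stated for \emph{every} $x\in X\setminus Y$ and the image of $x_2$ under the first map is again some such vector (or already in $Y$, in which case that step is skipped). One should also check the trivial edge case $S(F)\subseteq Y$, where no absorption is needed and $\widetilde T=S$ already works. With these points handled, $Y$ satisfies the defining property of a Gurariy space, completing the proof.
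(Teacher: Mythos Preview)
Your overall strategy---extend inside $X$, then retract into $Y$---is the right one, and the bound $m\le\dim F-\dim E$ is correctly observed, but the iteration in your second paragraph has a genuine gap. The a.i.-ideal hypothesis applied to $\operatorname{span}\{x_1,Y\}$ produces a $\de$-isometry $R_1$ defined only on a chosen finite-dimensional subspace of $\operatorname{span}\{x_1,Y\}$. Since $x_2,\dots,x_m$ are linearly independent from $x_1$ modulo $Y$, they do not lie in $\operatorname{span}\{x_1,Y\}$ at all, so $R_1$ is simply not defined at $x_2$. Your proposed resolution (``the image of $x_2$ under the first map is again some such vector'') therefore has no content: there is no first map acting on $x_2$. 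Extending $R_1$ to all of $G_0$ by declaring it to be the identity on $x_2,\dots,x_m$ does not help either, since a linear map that is a $\de$-isometry on each of two complementary subspaces need not be a $\de$-isometry (or anything close) on their span.

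The paper avoids this difficulty by first invoking the reduction \cite[Lemma~4.3.1]{GL}: to verify the Gurariy property it suffices to treat pairs $E\subseteq F$ with $\dim(F/E)=1$. With that in hand, the Gurariy extension $\widetilde T:F\to X$ automatically has image contained in $\operatorname{span}\{x_0,Y\}$ for a single vector $x_0=\widetilde T(f_0)$, and one application of the a.i.-ideal hypothesis finishes the proof with $(1+\de)^2\le 1+\e$. Your argument becomes correct if you either quote this reduction at the outset (forcing $m\le 1$), or else genuinely interleave the two operations---extend one dimension via the Gurariy property of $X$, retract into $Y$ via the a.i.-ideal, and repeat $k$ times---though the latter route requires handling $\eta$-isometries rather than isometries at intermediate stages.
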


\begin{proof}
We first observe that to prove $Y$ is a Gurariy space, by \cite[Lemma 4.3.1]{GL}, it is enough to show that for every $\e >0$, and every pair of finite dimensional spaces $E \subseteq F$ with dim$(F/E) = 1$, every isometry $T : E \to Y$ extends to an $\e$-isometry $\widetilde{T} : F \to Y$.

Let $\e >0$. Choose $\delta >0$ such that $(1+\delta)^2 \leq 1+\e$.

Let $E \subseteq F$ be finite dimensional subspaces with dim$(F/E) = 1$, and $T: E \longrightarrow Y \subseteq X$ be a linear isometry. Since $X$ is a Guarariy space, there exists a linear extension $\widetilde{T}: F \longrightarrow X$ of $T$ with
\[
(1+\delta)^{-1}\|f\| \leq \|\widetilde{T}f\| \leq (1+\delta)\|f\| \mbox{ for all } f \in F.
\]

Since dim$(F/E) = 1$, there exists $f_0 \in F$ such that $F = {\rm span}\{f_0, E\}$. Let $x_0 = \widetilde{T} (f_0)$ and $H = \widetilde{T} (F)$. Then $H \subseteq {\rm span}\{x_0, Y\}$ is finite dimensional. Since $Y$ is an a.i.-ideal in ${\rm span}\{x_0, Y\}$, there exists $S : H \to Y$ such that
\bla
\item $Sh = h$ for all $h \in H \cap Y$, and
\item $(1+\delta)^{-1}\|h\| \leq \|Sh\| \leq (1+\delta)\|h\|$ for all $h \in H$.
\el
It follows that $S\circ \widetilde{T}: F \longrightarrow Y$ is a linear extension of $T$ satisfying
\[
(1+\e)^{-1}\|f\| \leq \|S \circ \widetilde{T}f\| \leq (1+\e)\|f\| \mbox{ for all } f \in F.
\]
Therefore $Y$ is a Gurariy space.
\end{proof}

This answers the question posed in \cite[Question 17]{R} in the affirmative. If we replace a.i.-ideals by ideals in the above, this characterises $L_1$-predual spaces and was observed in \cite{R2}.

\begin{cor}
A Banach space $X$ is a Gurariy space if and only if it is an a.i.-ideal in every superspace in which it embeds as a hyperplane.
\end{cor}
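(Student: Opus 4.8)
The plan is to prove the two implications separately. The implication ``$X$ is a Gurariy space $\Rightarrow$ $X$ is an a.i.-ideal in every superspace in which it embeds as a hyperplane'' is immediate from the characterisation recalled just before Proposition~\ref{prop2}: by \cite[Theorem 4.3]{AVN} and the remark following it, a Gurariy space is an a.i.-ideal in \emph{every} superspace containing it, and hyperplane superspaces are a special case.

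For the converse, suppose $X$ is an a.i.-ideal in every superspace in which it embeds as a hyperplane. First I would record that this hypothesis forces $X$ to be infinite dimensional: $X$ is a hyperplane in $X \oplus \R$, and if $\dim X = n < \infty$ there is no $\e$-isometry from $X\oplus\R$ into $X$ at all, since an $\e$-isometry is injective while $\dim(X\oplus\R) = n+1 > n$; hence $X$ would not be an a.i.-ideal in $X \oplus \R$. Next I would fix an isometric embedding of $X$ into a Gurariy space $Z$: for separable $X$ this is the universality of $\cG$, and in general one invokes the amalgamation construction producing, for each infinite cardinal, Gurariy spaces of that density character that are isometrically universal for spaces of smaller density (see \cite{GK}). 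Then for every $z \in Z \setminus X$ the subspace $\mathrm{span}\{z,X\}$ is closed (a finite-codimensional extension of the closed subspace $X$) and contains $X$ as a hyperplane, so by hypothesis $X$ is an a.i.-ideal in $\mathrm{span}\{z,X\}$. Thus the pair $X \subseteq Z$ satisfies all the hypotheses of Proposition~\ref{prop2}, with $Z$ and $X$ playing the roles of the spaces called ``$X$'' and ``$Y$'' there, and Proposition~\ref{prop2} gives that $X$ is a Gurariy space.

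Apart from Proposition~\ref{prop2}, the only ingredient used is the isometric embeddability of an arbitrary Banach space into some Gurariy space, and I expect this to be the only point requiring real care. For separable $X$ it is classical, since $\cG$ is isometrically universal for separable Banach spaces; for non-separable $X$ one needs the existence of isometrically universal Gurariy spaces of a prescribed density character, which is available in the literature on these spaces. If one prefers to avoid this, the corollary can simply be stated for separable $X$, in which case the subtlety disappears entirely.
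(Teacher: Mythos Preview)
Your proposal is correct and follows essentially the same route as the paper: embed $X$ isometrically into a Gurariy space (the paper cites \cite{GK} for a Gurariy space of the same density), observe that the hypothesis makes $X$ an a.i.-ideal in every one-point extension inside that space, and apply Proposition~\ref{prop2}. Your explicit check that $X$ must be infinite dimensional is a detail the paper leaves implicit but which is indeed needed to invoke Proposition~\ref{prop2}; otherwise the arguments coincide.
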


\begin{proof}
It is known that $X$ embeds isometrically into a Gurariy space $G$ of the same density \cite{GK}. By hypothesis, every $g \in G \setminus X$, $X$ is an a.i.-ideal in ${\rm span}\{g, X\}$. By Proposition~\ref{prop2}, $X$ is a Gurariy space.
\end{proof}

\section{Miscellaneous Results}

We end this note with few more properties of Gurariy spaces, which are of independent interest.

\subsection{$M$-summands}

The next observation is closely related to the work of~\cite{BDS1}.

\begin{rem} \rm
Let $M$ be an $M$-summand of the Gurariy space $\mathbb{G}$, that is, $\mathbb{G}=M\oplus_{\infty}N$, where $M$ and $N$ are both infinite dimensional. By \cite[Theorem 3]{R} we get $M\cong \mathbb{G}$ and $N \cong \mathbb{G}$. But we know that $\mathbb{G}\oplus_{\infty}\mathbb{G}$ is not a Gurariy space  (see e.g., \cite[Remark 9]{R}). Hence, there does not exist any $M$-summand in the Gurariy space $\mathbb{G}$.
\end{rem}

\begin{rem} \rm
Every separable $L_1$-predual space is isometric to a $1$-complemented subspace of $\mathbb{G}$ \cite{W}. Hence quotient of the Gurariy space $\cG$ need not be the Gurariy space $\cG$.
\end{rem}

\begin{rem} \rm
We know that $\mathbb{G} \subseteq \mathbb{G} \oplus_{\infty} \mathbb{G} \subseteq G$, where $\mathbb{G}$ is the separable Gurariy space and $G$ is a non-separable Gurariy space. Here $\mathbb{G}$ is an a.i-ideal in $G$ but $\mathbb{G} \oplus_{\infty} \mathbb{G}$ is not an a.i-ideal in $G$ \cite[Remark 9]{R} \cite[Theorem 4.3]{AVN}.
\end{rem}

\subsection{$M$-ideals}

\bdfn
A subspace $M$ of a Banach space $X$ is said to be an $M$-ideal in $X$ if there is a projection $P$ on $X^*$ with $\ker(P) = M^\perp$ and for all $x^* \in X^*$, $\|x^*\| = \|Px^*\| +
\|x^*-Px^*\|$.
\edfn

It is known \cite[Theorem 3]{R} that any $M$-ideal in $\cG$ is isometric to $\cG$, and hence, is an a.i.-ideal.

\begin{prop}
Let $M$ be a separable $M$-ideal of a non-separable Gurariy space $G$. Then $M$ is the Gurariy space $\cG$ and hence an a.i-ideal in $G$.
\end{prop}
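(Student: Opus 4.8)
The plan is to show that $M$, being a separable $M$-ideal in the non-separable Gurariy space $G$, is in particular an almost isometric ideal in $G$, and then to invoke the a.i.-ideal theory to conclude that $M$ is itself a Gurariy space. Once $M$ is known to be a separable Gurariy space, the uniqueness of $\cG$ (stated in the excerpt) identifies $M$ with $\cG$.

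First I would recall that $M$-ideals are a.i.-ideals: this is exactly the content invoked in the preceding discussion of the excerpt (the case of $\cG$ itself via \cite[Theorem 3]{R}), but in fact the implication ``$M$-ideal $\Rightarrow$ a.i.-ideal'' holds in any Banach space — given a finite dimensional $E\subseteq G$ and $\e>0$, one uses the $M$-ideal structure together with the principle of local reflexivity (or the standard characterization of $M$-ideals via the $3$-ball property) to produce an $\e$-isometry $T:E\to M$ fixing $M\cap E$. So $M$ is an a.i.-ideal in $G$.

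Next I would apply \cite[Theorem 4.3]{AVN}, cited in the ``Almost isometric ideals'' section: since $G$ is a Gurariy space and $M\subseteq G$ is an a.i.-ideal in $G$, the space $M$ is a Gurariy space. As $M$ is assumed separable, the uniqueness of the separable Gurariy space gives $M\cong\cG$ isometrically. Finally, being isometric to $\cG$, $M$ is a Gurariy space, hence (again by the characterization in \cite[Theorem 4.3]{AVN}) an a.i.-ideal in every superspace containing it, in particular in $G$; this gives the last assertion.

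The one point deserving a little care — and the main obstacle, such as it is — is the first step, verifying that an arbitrary (not necessarily separable, not necessarily $1$-complemented) $M$-ideal is an a.i.-ideal. The cleanest route is the $3$-ball property (Harmand--Werner--Werner): if $M$ is an $M$-ideal in $G$, then for every finite dimensional $E\subseteq G$, every $\e>0$, and every finite collection of balls $B(g_i,r_i)$ meeting $M$ and with $\bigcap_i B(g_i,r_i)\neq\es$, the intersection meets $M$ up to $\e$; one then builds the required $\e$-isometry $E\to M$ by a standard finite-dimensional perturbation argument, fixing $M\cap E$. Alternatively, one simply cites the known fact that $M$-ideals are a.i.-ideals and avoids reproving it. Everything after that is a direct chain of citations to results already quoted in the paper.
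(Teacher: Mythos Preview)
Your argument hinges on the claim that every $M$-ideal is an a.i.-ideal, and this is the genuine gap. The sentence you cite from the paper --- ``any $M$-ideal in $\cG$ is isometric to $\cG$, and hence, is an a.i.-ideal'' --- does \emph{not} assert that implication. Its logic is: $M$-ideal in $\cG$ $\Rightarrow$ (by \cite[Theorem~3]{R}) isometric to $\cG$ $\Rightarrow$ Gurariy $\Rightarrow$ a.i.-ideal in every superspace. No general ``$M$-ideal $\Rightarrow$ a.i.-ideal'' statement is invoked, and none is available in the references you name. Your sketch via the $3$-ball property does not do the job either: the $3$-ball (or $n$-ball) characterisation of $M$-ideals produces good \emph{approximants} in $M$ to elements of $X$, but gives no mechanism for building an $\e$-isometry on a finite-dimensional $E$ that simultaneously fixes $E\cap M$ and preserves norms from below. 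Indeed, the implication fails outright in trivial cases (any finite-dimensional $M$-summand is an $M$-ideal but not an a.i.-ideal), so at minimum a nontrivial argument using infinite-dimensionality would be required; you have not supplied one, nor a citation.

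The paper's proof sidesteps exactly this difficulty. Rather than claim $M$ is an a.i.-ideal in $G$ directly, it first encloses the separable $M$ inside a \emph{separable} a.i.-ideal $Z$ with $M\subseteq Z\subseteq G$ (citing \cite{A}), identifies $Z$ with $\cG$ via \cite[Theorem~2.6]{BDS1}, then uses hereditarity of $M$-ideals \cite[Proposition~1.17]{HWW} to see that $M$ is an $M$-ideal in $Z\cong\cG$. At that point \cite[Theorem~3]{R} applies and gives $M\cong\cG$; the a.i.-ideal conclusion then follows from the Gurariy property, not from the $M$-ideal property. So the paper reduces to the separable case where Rao's theorem is available, whereas your route would need a general Banach-space fact that is not established.
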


\begin{proof}
Let $M$ be a separable $M$-ideal of a non-separable Gurariy space $G$. So there exists a separable a.i-ideal $Z$ such that $M \subseteq Z \subseteq G$ \cite{A}. Using \cite[Theorem 2.6]{BDS1} we get $Z$ is isometric to $\cG$. By \cite[Proposition 1.17]{HWW} $M$ is an $M$-ideal in $Z$. Now using \cite[Theorem 3]{R} we conclude that $M$ is the Gurariy space $\cG$, hence it is an a.i-ideal in $G$.

This completes the proof.
\end{proof}

\begin{prob}
Let $M$ be a non-separable $M$-ideal of a non-separable Gurariy space $G$. Is $M$ a Gurariy space?
\end{prob}

\begin{ack} \rm
The second author wishes to acknowledge the support received from NBHM postdoctoral fellowship, Department of Atomic Energy (DAE), Government of India (File No: 0204/3/2020/R$\&$D-II/2445).
\end{ack}

\end{document}